\theoremstyle{definition}
  \newtheorem{theorem}{Theorem}
  \newtheorem{lemma}[theorem]{Lemma}
  \newtheorem{remark}[theorem]{Remark}
\newcommand{\Z}{{\bf Z}}
\newcommand{\R}{{\bf R}}
\newcommand{\C}{{\bf C}}
\newcommand{\HH}{{\bf{H}}}
\newcommand{\pt}{\mathrm{pt}}
\newcommand{\tauu}{{\beta}}
\newcommand{\Ker}{\mbox{\rm Ker}\,}
\newcommand{\Coker}{\mbox{\rm Coker}\,}
\newcommand{\Spin}{\mbox{\rm Spin}\,}
\newcommand{\spin}{{\rm spin}\,}
\newcommand{\spinc}{{\rm spin}^c\,}
\newcommand{\rank}{\text{rank}\,}
\newcommand{\Ind}{\mbox{\rm Ind}\,}
\newcommand{\id}{1}
\newcommand{\dpin}{\Gamma}
\newcommand{\pin}{\mathrm{Pin^-(2)}}
\newcommand{\pinn}{\mathrm{Pin^-(1)}}
\newcommand{\Deltaa}{B}
\newcommand{\diff}{\omega}
\newcommand{\bysame}%
 {\leavevmode\hbox to 3em{\hrulefill}\,}
\title{Spin structures and the divisibility of Euler  classes}
\author{Yukio Kametani}
\begin{document}
\maketitle
\abstract{
In this short article
we give a geometric meaning of
the divisibility of $KO$-theoretical Euler classes
for given two spin modules.
We are motivated by 
Furuta's 10/8-inequality for a closed spin $4$-manifold.
The role of the reducibles
is clarified in the monopole equations of Seiberg-Witten theory,
as done by Donaldson and Taubes in Yang-Mills theory.
}

\section{Introduction}
Since Donaldson's celebrated work \cite{Donaldson},
the intersection form of spin $4$-manifolds
has been one of interests in gauge theory.
In the renewal of $4$-dimensional topology by Seiberg-Witten theory,
P.~B.~Kronheimer \cite{Kr} gave a lecture
on this problem following the method of Yang-Mills theory.
Soon after, M.~Furuta \cite{Furuta}
extracted an equivariant map from the monopole equation
to get the 10/8-inequality
through the divisibility of Euler classes in $K$-theory \cite{AtiyahTall}.
Although 
Furuta's approach is more sophisticated than Kronheimer's one,
its geometric meaning seems to have become vague.

In this paper we will build a bridge between
the geometry of the moduli spaces 
and the divisibility of the Euler classes.
Our argument seems to be a straightforward extension of Kronheimer's
method.
We bypass Furuta's finite dimensional approximation of the 
equation,
but use the local Kuranishi model of the reducibles.
The divisibility of the Euler classes
is a consequence of the compactness of the moduli space,
together with an equivariant spin structure on it.

We also investigate the divisibility when the first Betti number is positive
\cite{FK2}.
In this case we also need to take account of a natural map
from the moduli space to the Jacobian torus.
In \cite{FKMM} we have already used it to get
an invariant from the moduli space.
Related topics have been also discussed by H.~Sasahira \cite{Sasahira}.

The main result of this paper is a by-product of
the author's joint work with M.~Furuta on improvements of the 10/8-inequality 
\cite{FK2}.
This type of observation
originated from the three proofs of a special case of the inequality
in \cite{FKMa}.

This paper consists of two parts.
In Section 2 we give the main result,
which interprets the divisibility
in a general setting of cobordism theory.
To clarify the argument we divide it into three cases.
In Section 3 we set up gauge theory respecting $\pin$-symmetry
\cite{Kr}, \cite{Witten}
to get the divisibility \cite{Furuta}, \cite{FK2}
from our point of view.

The main theorem was announced 
at University of Minnesota in the summer of 2006.
The author thanks to T. J. Li for organizing the seminar,
and M. Furuta for his valuable suggestions and announcement of this result.
This paper was yielded from a collaborated work with him \cite{FK2}.
Some ideas have been already introduced in it.

\section{Main results}

Our main result is expressed by
a relation between
equivariant cobordism theory and $K$-theory.
In many variants of both theory, 
a specific case is applied to Seiberg-Witten theory.
However to clarify our argument,
we first explain a simpler version.

\subsection{Spin${}^c$ case}

We first recall the definition of equivariant $K$-theory.
Let $G$ be a compact Lie group.
For a compact Hausdorff $G$-space $B$,
we write the equivariant $K$-group as $K_G(B)$.
It is the Grothendieck group
of isomorphism classes of complex $G$-vector bundles over $B$.
For a (possibly) locally compact Hausdorff $G$-space $B$,
let
$B^+=B \cup \{+\}$ be the one point compactification of $B$.
Then 
$K_G(B)$ is the kernel of the map
$i^* : K_G(B^+) \to K_G(+)=R(G)$
induced from the inclusion $i: \{+\}\to B$.
More generally, for a closed $G$-subspace $C$ of $B$,
we define $K_G(B,C)=K_G(B \setminus C)$.
If we put $K^{-n}_G(B)=K_G(B \times \R^n)$,
where $\R^n$ is the $n$-dimensional trivial $G$-module,
we have equivariant cohomology theory for
locally compact Hausdorff $G$-spaces
(c.f. \cite{Karoubi}).
In the following argument we do not use cohomology theory
explicitly, but it helps us to achieve our results.

Let $V$ be a real $\spinc$ $G$-module of dimension $n$.
It means that 
the action on $V$ factors through
a given homomorphism
$G \to \Spin^c(n)=(\Spin(n) \times U(1))/\{ \pm 1\}$.

When $\dim V$ is even,
Bott periodicity theorem \cite{Atiyah}
tells us that 
the Bott class $\tauu(V) \in K_G(V)$ is
formed from the irreducible complex Clifford module for $V$
and $K_G(V)$ is freely generated  $\tauu(V)$ as an $R(G)$-module.
Then the Euler class $e(V) \in R(G)$
is defined to be the restriction of $\beta(V)$ to the zero $\{0\}$.

We next consider cobordism classes obtained from complex $G$-modules.
We first prepare some notations.
We take an auxiliary $G$-invariant norm on $V$.
Put $D(V)=\{v \in V \mid ||v|| \le 1\}$
and write its boundary as $S(V)$.
Write $E(V)=V \setminus \text{(interior of $D(V)$)}$.

Let $V_0$, $V_1$ be two real $\spinc$ $G$-modules.
We suppose that
\begin{enumerate}
\item[(i)] $\dim V_0 \ge \dim V_1 > 0$.
\item[(ii)] $\dim V_0 \equiv \dim V_1 \equiv 0 \pmod 2$.
\item[(iii)] the $G$-action on $V_0$ is free except the zero.
\end{enumerate} 
Then we can take  a $G$-map $\varphi: S(V_0) \to V_1$
which is transversal to the zero,
so that $\varphi^{-1}(0)$ is a closed 
free $G$-submanifold with
a $G$-isomorphism
$$ T \varphi^{-1}(0) \oplus \underline \R \oplus  
\underline {V_1} 
\cong \underline {V_0}.$$
Thus $\varphi^{-1}(0)$ has a unique $\spinc$ $G$-structure
such that the above isomorphism is a spin $G$-isomorphism,
where $\R$ is considered to be the trivial $G$-module $\R$.
This follows from the non-equivariant case (c.f. \cite{Kirby})
by taking the quotient space.

We let $\Omega_{G, \rm free}^{\spinc}$ be the
cobordism group of closed $\spinc$ free $G$-manifolds.
The cobordism class
$[\varphi^{-1}(0)] \in \Omega_{G,\rm free}^{\spinc}$
is independent of th choice of $\varphi$,.
since $V_1$ is $G$-contractible.
So we may write it as $\diff(V_0,V_1)$.


\begin{theorem}\label{spinc-th}
Let $V_0, V_1$ be real $\spinc$ $G$-modules satisfying
the condition (i), (ii), (iii) in the above.
If the cobordism class $\diff(V_0,V_1)
\in \Omega_{G, \rm free}^{\spinc}$ is zero,
there exists an element $\alpha$ in
$R(G)$ 
such that $$e(V_1) = \alpha e(V_0).$$
\end{theorem}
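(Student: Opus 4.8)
The plan is to translate the vanishing of the cobordism class $\diff(V_0, V_1)$ into an equality of Euler classes using the Pontryagin--Thom construction in equivariant $K$-theory. First I would choose a $G$-map $\varphi : S(V_0) \to V_1$ transverse to the zero section, so that $M := \varphi^{-1}(0)$ is a closed free $\spinc$ $G$-manifold with the stable normal framing encoded by the isomorphism $TM \oplus \underline\R \oplus \underline{V_1} \cong \underline{V_0}$. The hypothesis says $[M] = 0$ in $\Omega_{G,\rm free}^{\spinc}$, so there is a compact free $\spinc$ $G$-manifold $W$ with $\partial W = M$ and a compatible stable $\spinc$ $G$-structure; I want to extend the normal data over $W$ as well, which should be possible since $V_1$ is $G$-contractible (so $\underline{V_1}$ and $\underline\R$ are trivial bundles that extend, and the spin$^c$ structure on the stable normal bundle of $W$ relative to $\underline{V_0}$ is the obstruction that the nullcobordism is designed to kill).

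The main step is a Gysin/pushforward argument. Thickening $M \hookrightarrow S(V_0) \times \R^N$ (for large $N$) with normal bundle $\nu$ stably isomorphic to $\underline{V_1} \oplus \underline\R \oplus (\text{trivial})$ via the above, the Pontryagin--Thom collapse gives a $G$-map of Thom spaces, hence a pushforward $\varphi_! : K_G(M) \to K_G(S(V_0))$, or better, working on the disk, a map whose composite with restriction to the origin computes $e(V_1)$ up to the relevant units. Concretely: the class $\beta(V_1) \in K_G(V_1)$ pulls back under $\varphi$ to a class on $S(V_0)$ supported near $M$, i.e. an element of $K_G(D(V_0), S(V_0)) \cong K_G(V_0)$ — call it $\varphi^*\beta(V_1)$; restricting to $\{0\}$ gives $e(V_1) \in R(G)$ on the one hand (since $\varphi$ restricted near $0 \in V_0$ is, up to homotopy through the contractibility of $V_1$, governed by the linear part), while on the other hand this same class lies in the image of $K_G(V_0) = R(G)\cdot\beta(V_0) \to R(G)$, which is exactly multiplication by $R(G)$ on $e(V_0)$. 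Here is where the nullcobordism $W$ enters: it shows that $\varphi^*\beta(V_1)$, viewed in $K_G(D(V_0), S(V_0))$, actually extends to a class on all of $D(V_0)$ — equivalently that the Pontryagin--Thom class bounds — so it lies in the image of $K_G(D(V_0)) = R(G)$ under the map induced by $V_0 \hookleftarrow S(V_0)$, forcing $\varphi^*\beta(V_1) = \alpha\,\beta(V_0)$ for some $\alpha \in R(G)$.

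Restricting the identity $\varphi^*\beta(V_1) = \alpha\,\beta(V_0)$ to the zero section $\{0\} \subset V_0$ then yields $e(V_1) = \alpha\, e(V_0)$, using that the restriction of $\beta(V_i)$ to $0$ is by definition $e(V_i)$ and that $\varphi$ near $0$ is $G$-homotopic to its derivative (again by contractibility of $V_1$, condition (i) on dimensions, and freeness (iii) which guarantees transversality can be arranged and the Kuranishi-type local model is linear). The hard part, I expect, is bookkeeping the stable framings and spin$^c$ structures carefully enough that the Pontryagin--Thom class and its bounding over $W$ land in $K_G$ with the correct Bott/Thom identifications — i.e. verifying that "the nullcobordism kills the relative $K$-theory class" precisely, rather than just in ordinary cobordism; conditions (ii) (evenness, for Bott periodicity to make $K_G(V_i)$ free of rank one) and (iii) (freeness, so that everything descends to the quotient and the non-equivariant transversality results apply) are exactly what make this translation legitimate.
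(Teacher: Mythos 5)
Your overall slogan is the right one---the null-cobordism supplies, via the Thom class of its normal bundle, an extension of a $K$-theoretic class, and Bott periodicity then forces divisibility---but the central step of your sketch is garbled in a way that leaves a genuine gap. The class $\varphi^*\beta(V_1)$ is a class on $S(V_0)$ (canonically trivialized away from a neighbourhood of $\varphi^{-1}(0)$); it is \emph{not} an element of $K_G(D(V_0),S(V_0))\cong K_G(V_0)$, since $\varphi$ is not even defined on the interior of the disk and its zero set lies on the boundary sphere, not in the interior. Because of this misplacement, your pivotal sentence does not parse: if a class in $K_G(S(V_0))$ ``extends over $D(V_0)$'' it is of the form $\alpha\cdot 1$ restricted to the sphere, not $\alpha\beta(V_0)$; whereas if the class already lived in $K_G(D(V_0),S(V_0))\cong R(G)\cdot\beta(V_0)$ it would be a multiple of $\beta(V_0)$ automatically, with no input from the null-cobordism at all. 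Either reading leaves the cobordism hypothesis doing no work, and the final restriction argument (``$\varphi$ near $0\in V_0$ is governed by its linear part'') makes no sense for a map defined only on $S(V_0)$.

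What is missing is precisely the geometric and gluing machinery the paper builds. One must first stabilize by an auxiliary even-dimensional $\spinc$ $G$-module $V$ and use Lemma~\ref{G-ex} to embed the null-cobordism equivariantly into the exterior region $E(V_0\oplus V)$, relative to the given inclusion of its boundary $\varphi^{-1}(0)\subset S(V_0)$, and \emph{avoiding} $E(0\oplus V)$---this is where freeness (iii) and the dimension hypothesis are actually used, and your sketch never says where the bounding manifold is embedded (you only thicken the zero set inside $S(V_0)\times\R^N$). The Thom class of the normal bundle of this embedded cobordism then extends $\varphi_V^*\beta(V_1\oplus V)$ from the sphere over the exterior, producing a class $\gamma$ on the glued space $E(V_0\oplus V)\cup_{\varphi_V}(V_1\oplus V)$ relative to the ends; homotoping $\varphi_V$ to $(u,v)\mapsto(0,v)$ identifies this space with $(V_0\oplus V)\cup_{\id_{D(0\oplus V)}}(V_1\oplus V)$, the two restrictions of $\gamma$ are $\alpha\beta(V_0\oplus V)$ (by Bott periodicity) and $\beta(V_1\oplus V)$, and comparing them on the common factor $V$ gives $e(V_1)\beta(V)=\alpha e(V_0)\beta(V)$, hence the theorem. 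Without the equivariant embedding step and this gluing-and-comparison step, ``the Pontryagin--Thom class bounds'' is only an assertion and does not convert into the equation $e(V_1)=\alpha e(V_0)$.
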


\begin{lemma}\label{G-ex}
Let $G$ be a compact Lie group,
and $V_0$ a real $G$-module.
Let $M$ be a compact $G$-manifold with boundary $\partial M$
and $i:\partial M \to S(V_0)$ a $G$-embedding.
Then there exist a real $G$-module $V$
and an $G$-embedding $i': M \to V_0 \oplus V$
such that $i'|\partial M=i$.
Moreover if $G$ acts on $M$ freely and $\dim  M  <  \dim V_0$,
we can take $i'$ to be
$i'(M) \subset E(V_0 \oplus V)\setminus E(0 \oplus V)$.
\end{lemma}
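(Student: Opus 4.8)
The plan is to first handle the non-equivariant statement and then pass to the equivariant case by a standard averaging/invariance trick, or — more cleanly — to work equivariantly throughout using the existence of a finite-dimensional faithful representation. First I would recall the equivariant collar: since $\partial M$ is a $G$-submanifold of the compact $G$-manifold $M$, there is a $G$-invariant collar $\partial M \times [0,1) \hookrightarrow M$ on which the embedding $i$ extends to $i(x)\cdot(1-t)$ or similar; more precisely I would extend $i:\partial M \to S(V_0)\subset V_0$ to a $G$-map $\tilde\imath$ defined on the collar with values in $V_0$, interpolating between $i$ on $\partial M$ and something that pushes slightly inward. The core of the argument is then the equivariant Mostow–Palais embedding theorem: any compact $G$-manifold $M$ admits a $G$-embedding into some real $G$-module $W$. (This is where one uses that $G$ is a compact Lie group: $M/G$ embeds in Euclidean space, $M$ has finitely many orbit types, and one averages coordinate functions over $G$ to build enough invariant functions.) Call that embedding $j:M\to W$.

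Next I would combine the boundary data with the interior embedding. Take a $G$-invariant smooth cutoff function $\rho:M\to[0,1]$ with $\rho\equiv 1$ near $\partial M$ and $\rho\equiv 0$ outside the collar, and form
$$
i'(x) = \bigl(\rho(x)\,\tilde\imath(x),\ (1-\rho(x))\,j(x)\bigr)\in V_0\oplus W,
$$
after first checking that near $\partial M$ this has $i'|\partial M = i$ and that $i'$ is an immersion there (the $V_0$-component is an immersion near the boundary because $i$ is), while away from the collar $i'$ is an immersion because the $W$-component is. On the overlap region one has to argue injectivity and immersivity of the combined map; the usual fix is to replace $W$ by $W\oplus W'$ and add further coordinates (e.g. a copy of $j$ itself, or the collar parameter) so that the map becomes injective and an immersion everywhere — this is the standard "embedding = immersion + injection, achieved by adding coordinates" bookkeeping. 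Set $V = W\oplus W'$.

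For the final sentence, suppose $G$ acts freely on $M$ and $\dim M < \dim V_0$. I want $i'(M)\subset E(V_0\oplus V)\setminus E(0\oplus V)$, i.e. $\|i'(x)\|\ge 1$ always and the $V_0$-component never vanishes. Since $G$ acts freely on $M$ and on $V_0\setminus\{0\}$, and $\dim M < \dim V_0 = \dim (V_0\setminus\{0\})+1$... more to the point, the $G$-map $M\to V_0$ given by the $V_0$-component of $i'$ agrees with $i$ (hence is nonzero) near $\partial M$; I would perturb it, rel $\partial M$, through $G$-maps to one that avoids $0\in V_0$ entirely. This is possible by equivariant transversality: a generic $G$-map $M\to V_0$ has preimage of $0$ a free $G$-manifold of dimension $\dim M - \dim V_0 < 0$, hence empty. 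Then rescaling the $V_0$-component (multiplying by a large invariant constant, or flowing radially outward) pushes $\|i'(x)\|\ge 1$, landing us in $E(V_0\oplus V)$, while nonvanishing of the $V_0$-component is exactly $i'(M)\cap E(0\oplus V)=\emptyset$.

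The main obstacle I expect is the gluing step: making $i'$ a genuine $G$-embedding (not merely a $G$-immersion or a map that is an embedding on each of two pieces separately) on the transition region, which forces one to enlarge $V$ by auxiliary coordinates and check injectivity globally; and, in the free case, carrying out the equivariant transversality/perturbation argument rel boundary so that the $V_0$-component stays nonzero — one must be careful that the perturbation can be taken $G$-equivariant and supported away from $\partial M$, which again uses freeness of the action and a $G$-invariant partition of unity.
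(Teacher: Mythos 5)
Your overall route is the same as the paper's: extend $i$ over a $G$-collar, quote the Mostow--Palais/Bredon equivariant embedding theorem to get $j:M\to V'$, splice the two with $G$-invariant cutoffs while adding auxiliary coordinates to restore injectivity and immersivity (the paper adds the cutoff functions themselves, taking $i'=(\rho_1 i,\rho_2 j,\rho_1-1,\rho_2)$ into $V_0\oplus V'\oplus\R^2$), and in the free case remove intersections with $0\oplus V$ by an equivariant transversality perturbation killed by the dimension count $\dim M<\dim V_0$ (freeness of the action on $M$ is what makes both the transversality argument and, in the paper, the fact that $j(M)$ misses the origin work). Up to this point the proposal is sound and matches the paper.

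The genuine problem is your mechanism for forcing $\|i'(x)\|\ge 1$, i.e.\ $i'(M)\subset E(V_0\oplus V)$. Multiplying the $V_0$-component by a large invariant constant destroys the boundary condition $i'|_{\partial M}=i$, since $i$ takes values in the unit sphere $S(V_0)$; and ``flowing radially outward'' (pushing points to norm $\ge 1$) is not injective on rays that enter the open unit ball, so it can ruin the embedding property. There is also a transition-zone issue in your splicing with complementary weights $\rho$ and $1-\rho$: where $\rho$ is close to $1$ but not equal to $1$, neither $\rho\,\tilde\imath$ nor $(1-\rho)j$ need have norm $\ge 1$, no matter how much you rescale $j$. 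The paper's arrangement avoids both difficulties at once: choose the collar extension so that the $V_0$-component has norm exactly $1$ on the whole plateau where $\rho_1\equiv 1$, rescale the \emph{auxiliary} embedding $j$ (whose cutoff $\rho_2$ vanishes near $\partial M$, so rescaling it never disturbs $i'|_{\partial M}=i$) so that $j(M)\subset E(V')$, and use two cutoffs with overlapping plateaus so that at every point of $M$ at least one of the two components already has norm $\ge 1$. Also note a small misstatement: the lemma does not assume $G$ acts freely on $V_0\setminus\{0\}$ (you invoke this in a sentence you then abandon); only freeness on $M$ is available, and it suffices.
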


\begin{proof}
Using a $G$-color $\partial M \times [0,1]$ of $\partial M$,
we obtain a $G$-embedding
$i': \partial M \times [0,1] \to V_0 \times \R$
whose restriction to $\partial M \times \{0\}$ is $i$.
We next use \cite[Chapter VI]{Bredon}
to get a real  $G$-module $V'$
and a $G$-embedding $j: M \to V'$.
(One can use the double of M to reduce the case of closed $G$-manifolds.)
Let $\rho_k: M \to \R$ ($k=1,2$)
be a smooth $G$-invariant cut-off function such that
\begin{align*}
\begin{cases}
\rho_1(x)=1 &  x \in \partial M \times [0, \frac 2 3],\\
0 < \rho_1(x) <1 & x \in \partial M \times (\frac 2 3, \frac 5 6), \\
\rho_1(x)=0 &  x \notin \partial M \times [0, \frac 5 6],
\end{cases}
\quad
\begin{cases}
\rho_2(x)=0 &  x \in \partial M \times [0, \frac 1 6],\\
0 < \rho_2(x) <1 & x \in \partial M \times (\frac 1 6, \frac 1 3),\\
\rho_2(x)=1 &  x \notin \partial M \times [0, \frac 1 3].
\end{cases}
\end{align*}
Then
$i'=(\rho_1 i,  \rho_2 j, \rho_1-1, \rho_2):
M \to V_0 \oplus V'  \oplus \R^2$
is a desired $G$-embedding.

We also assume the additional condition.
Then $j(M) \cap \{0\}=\emptyset$,
and we may assume $j(M) \subset E(V')$
and $i'(M) \subset E(V_0 \oplus V'\oplus \R^2)$.
Moreover we can take $i'(M)$ to be  transversal to $V' \oplus \R^2$ in 
$V_0 \oplus V'  \oplus \R^2$.
The dimension condition implies
that $i'(M) \cap (0 \oplus V' \oplus \R^2) =\emptyset$.
\end{proof}

\noindent(Proof of Theorem \ref{spinc-th})
Pick a $G$-map $\varphi: S(V_0) \to V_1$ as above,
so that $[\varphi^{-1}(0)]=\diff(V_0,V_1)$.
By assumption we have a compact $\spinc$ free $G$-manifold $M$
with $\partial M =\varphi^{-1}(0)$ as $\spinc$ $G$-manifolds.
Let $i: \partial M \to V_0$ be the inclusion map.
We use Lemma \ref{G-ex} to find a $G$-module $V$ 
and a $G$-embedding $i': M \to E(V_0 \oplus V)\setminus E(0 \oplus V)$
such that $i'|\partial M=i$.
By adding some $G$-module, we may suppose that
$V$ is $\spinc$ $G$-module
and $\dim V \equiv 0 \pmod 2$.
For instance, $V \oplus V \cong V \otimes \C$ will do,
if we use 
the canonical $\Spin^c$ structure on $V \otimes \C$
(c.f. \cite{Atiyah-B-S}).

\begin{center}
{\unitlength 0.1in%
\begin{picture}(29.6000,26.2000)(7.6000,-30.8000)%
\put(20.9200,-17.7700){\makebox(0,0)[rt]{O}}%
\put(20.7300,-4.6000){\makebox(0,0)[rt]{$V$}}%
\put(34.7000,-17.9700){\makebox(0,0)[rt]{$V_0$}}%
%
\special{pn 8}%
\special{pa 2099 3080}%
\special{pa 2099 460}%
\special{fp}%
\special{sh 1}%
\special{pa 2099 460}%
\special{pa 2079 527}%
\special{pa 2099 513}%
\special{pa 2119 527}%
\special{pa 2099 460}%
\special{fp}%
%
\special{pn 8}%
\special{pa 760 1770}%
\special{pa 3470 1770}%
\special{fp}%
\special{sh 1}%
\special{pa 3470 1770}%
\special{pa 3403 1750}%
\special{pa 3417 1770}%
\special{pa 3403 1790}%
\special{pa 3470 1770}%
\special{fp}%
%
\special{pn 20}%
\special{pa 2099 1101}%
\special{pa 2099 480}%
\special{fp}%
%
\special{pn 20}%
\special{pa 2099 2452}%
\special{pa 2099 3080}%
\special{fp}%
\put(27.7100,-10.8100){\makebox(0,0)[lb]{$E(V_0\oplus V)$}}%
\put(21.7100,-6.2900){\makebox(0,0)[lb]{$E(0\oplus V)$}}%
%
\special{pn 8}%
\special{ar 2099 1770 651 673 0.4504814 0.4464672}%
%
\special{pn 4}%
\special{pa 1740 2340}%
\special{pa 1380 2700}%
\special{fp}%
\special{pa 1700 2320}%
\special{pa 1320 2700}%
\special{fp}%
\special{pa 1670 2290}%
\special{pa 1260 2700}%
\special{fp}%
\special{pa 1640 2260}%
\special{pa 1200 2700}%
\special{fp}%
\special{pa 1610 2230}%
\special{pa 1140 2700}%
\special{fp}%
\special{pa 1580 2200}%
\special{pa 1090 2690}%
\special{fp}%
\special{pa 1560 2160}%
\special{pa 1080 2640}%
\special{fp}%
\special{pa 1530 2130}%
\special{pa 1080 2580}%
\special{fp}%
\special{pa 1510 2090}%
\special{pa 1080 2520}%
\special{fp}%
\special{pa 1490 2050}%
\special{pa 1080 2460}%
\special{fp}%
\special{pa 1470 2010}%
\special{pa 1080 2400}%
\special{fp}%
\special{pa 1460 1960}%
\special{pa 1080 2340}%
\special{fp}%
\special{pa 1440 1920}%
\special{pa 1080 2280}%
\special{fp}%
\special{pa 1440 1860}%
\special{pa 1080 2220}%
\special{fp}%
\special{pa 1430 1810}%
\special{pa 1080 2160}%
\special{fp}%
\special{pa 1410 1770}%
\special{pa 1080 2100}%
\special{fp}%
\special{pa 1350 1770}%
\special{pa 1080 2040}%
\special{fp}%
\special{pa 1290 1770}%
\special{pa 1080 1980}%
\special{fp}%
\special{pa 1230 1770}%
\special{pa 1080 1920}%
\special{fp}%
\special{pa 1170 1770}%
\special{pa 1080 1860}%
\special{fp}%
\special{pa 1110 1770}%
\special{pa 1080 1800}%
\special{fp}%
\special{pa 1780 2360}%
\special{pa 1440 2700}%
\special{fp}%
\special{pa 1820 2380}%
\special{pa 1500 2700}%
\special{fp}%
\special{pa 1860 2400}%
\special{pa 1560 2700}%
\special{fp}%
\special{pa 1910 2410}%
\special{pa 1620 2700}%
\special{fp}%
\special{pa 1950 2430}%
\special{pa 1680 2700}%
\special{fp}%
\special{pa 2010 2430}%
\special{pa 1740 2700}%
\special{fp}%
\special{pa 2060 2440}%
\special{pa 1800 2700}%
\special{fp}%
\special{pa 2080 2480}%
\special{pa 1860 2700}%
\special{fp}%
\special{pa 2080 2540}%
\special{pa 1920 2700}%
\special{fp}%
\special{pa 2080 2600}%
\special{pa 1980 2700}%
\special{fp}%
\special{pa 2080 2660}%
\special{pa 2040 2700}%
\special{fp}%
%
\special{pn 4}%
\special{pa 3120 2100}%
\special{pa 2520 2700}%
\special{fp}%
\special{pa 3120 2040}%
\special{pa 2460 2700}%
\special{fp}%
\special{pa 3120 1980}%
\special{pa 2400 2700}%
\special{fp}%
\special{pa 3120 1920}%
\special{pa 2340 2700}%
\special{fp}%
\special{pa 3120 1860}%
\special{pa 2280 2700}%
\special{fp}%
\special{pa 3120 1800}%
\special{pa 2220 2700}%
\special{fp}%
\special{pa 3090 1770}%
\special{pa 2160 2700}%
\special{fp}%
\special{pa 2510 2290}%
\special{pa 2110 2690}%
\special{fp}%
\special{pa 2360 2380}%
\special{pa 2100 2640}%
\special{fp}%
\special{pa 2260 2420}%
\special{pa 2100 2580}%
\special{fp}%
\special{pa 2190 2430}%
\special{pa 2100 2520}%
\special{fp}%
\special{pa 3030 1770}%
\special{pa 2590 2210}%
\special{fp}%
\special{pa 2970 1770}%
\special{pa 2690 2050}%
\special{fp}%
\special{pa 2910 1770}%
\special{pa 2720 1960}%
\special{fp}%
\special{pa 2850 1770}%
\special{pa 2740 1880}%
\special{fp}%
\special{pa 2790 1770}%
\special{pa 2750 1810}%
\special{fp}%
\special{pa 3120 2160}%
\special{pa 2580 2700}%
\special{fp}%
\special{pa 3120 2220}%
\special{pa 2640 2700}%
\special{fp}%
\special{pa 3120 2280}%
\special{pa 2700 2700}%
\special{fp}%
\special{pa 3120 2340}%
\special{pa 2760 2700}%
\special{fp}%
\special{pa 3120 2400}%
\special{pa 2820 2700}%
\special{fp}%
\special{pa 3120 2460}%
\special{pa 2880 2700}%
\special{fp}%
\special{pa 3120 2520}%
\special{pa 2940 2700}%
\special{fp}%
\special{pa 3120 2580}%
\special{pa 3000 2700}%
\special{fp}%
\special{pa 3120 2640}%
\special{pa 3060 2700}%
\special{fp}%
%
\special{pn 4}%
\special{pa 2690 900}%
\special{pa 3720 900}%
\special{pa 3720 1170}%
\special{pa 2690 1170}%
\special{pa 2690 900}%
\special{ip}%
%
\special{pn 4}%
\special{pa 3120 1260}%
\special{pa 2740 1640}%
\special{fp}%
\special{pa 3120 1320}%
\special{pa 2740 1700}%
\special{fp}%
\special{pa 3120 1380}%
\special{pa 2750 1750}%
\special{fp}%
\special{pa 3120 1440}%
\special{pa 2790 1770}%
\special{fp}%
\special{pa 3120 1500}%
\special{pa 2850 1770}%
\special{fp}%
\special{pa 3120 1560}%
\special{pa 2910 1770}%
\special{fp}%
\special{pa 3120 1620}%
\special{pa 2970 1770}%
\special{fp}%
\special{pa 3120 1680}%
\special{pa 3030 1770}%
\special{fp}%
\special{pa 3120 1740}%
\special{pa 3090 1770}%
\special{fp}%
\special{pa 3120 1200}%
\special{pa 2730 1590}%
\special{fp}%
\special{pa 3090 1170}%
\special{pa 2710 1550}%
\special{fp}%
\special{pa 3030 1170}%
\special{pa 2700 1500}%
\special{fp}%
\special{pa 2970 1170}%
\special{pa 2680 1460}%
\special{fp}%
\special{pa 2910 1170}%
\special{pa 2660 1420}%
\special{fp}%
\special{pa 2850 1170}%
\special{pa 2630 1390}%
\special{fp}%
\special{pa 2790 1170}%
\special{pa 2610 1350}%
\special{fp}%
\special{pa 2730 1170}%
\special{pa 2580 1320}%
\special{fp}%
\special{pa 2690 1150}%
\special{pa 2550 1290}%
\special{fp}%
\special{pa 2690 1090}%
\special{pa 2520 1260}%
\special{fp}%
\special{pa 2690 1030}%
\special{pa 2490 1230}%
\special{fp}%
\special{pa 2690 970}%
\special{pa 2450 1210}%
\special{fp}%
\special{pa 2690 910}%
\special{pa 2410 1190}%
\special{fp}%
\special{pa 2710 830}%
\special{pa 2370 1170}%
\special{fp}%
\special{pa 2650 830}%
\special{pa 2330 1150}%
\special{fp}%
\special{pa 2590 830}%
\special{pa 2290 1130}%
\special{fp}%
\special{pa 2530 830}%
\special{pa 2240 1120}%
\special{fp}%
\special{pa 2470 830}%
\special{pa 2190 1110}%
\special{fp}%
\special{pa 2410 830}%
\special{pa 2140 1100}%
\special{fp}%
\special{pa 2350 830}%
\special{pa 2100 1080}%
\special{fp}%
\special{pa 2290 830}%
\special{pa 2100 1020}%
\special{fp}%
\special{pa 2230 830}%
\special{pa 2100 960}%
\special{fp}%
\special{pa 2170 830}%
\special{pa 2100 900}%
\special{fp}%
\special{pa 2770 830}%
\special{pa 2700 900}%
\special{fp}%
\special{pa 2830 830}%
\special{pa 2760 900}%
\special{fp}%
\special{pa 2890 830}%
\special{pa 2820 900}%
\special{fp}%
\special{pa 2950 830}%
\special{pa 2880 900}%
\special{fp}%
\special{pa 3010 830}%
\special{pa 2940 900}%
\special{fp}%
\special{pa 3070 830}%
\special{pa 3000 900}%
\special{fp}%
\special{pa 3120 840}%
\special{pa 3060 900}%
\special{fp}%
%
\special{pn 4}%
\special{ia 1440 2030 0 90 3.1415927 6.2831853}%
%
\special{pn 8}%
\special{ar 1440 1430 280 340 1.5707963 4.2934136}%
%
\special{pn 8}%
\special{pa 1380 990}%
\special{pa 1620 990}%
\special{pa 1620 1210}%
\special{pa 1380 1210}%
\special{pa 1380 990}%
\special{ip}%
\put(14.5000,-11.8000){\makebox(0,0)[lb]{$M$}}%
%
\special{pn 8}%
\special{pa 1100 830}%
\special{pa 3120 830}%
\special{pa 3120 2700}%
\special{pa 1100 2700}%
\special{pa 1100 830}%
\special{ip}%
%
\special{pn 4}%
\special{pa 1450 830}%
\special{pa 1100 1180}%
\special{fp}%
\special{pa 1510 830}%
\special{pa 1100 1240}%
\special{fp}%
\special{pa 1380 1020}%
\special{pa 1100 1300}%
\special{fp}%
\special{pa 1210 1250}%
\special{pa 1100 1360}%
\special{fp}%
\special{pa 1170 1350}%
\special{pa 1100 1420}%
\special{fp}%
\special{pa 1160 1420}%
\special{pa 1100 1480}%
\special{fp}%
\special{pa 1160 1480}%
\special{pa 1100 1540}%
\special{fp}%
\special{pa 1170 1530}%
\special{pa 1100 1600}%
\special{fp}%
\special{pa 1180 1580}%
\special{pa 1100 1660}%
\special{fp}%
\special{pa 1210 1610}%
\special{pa 1100 1720}%
\special{fp}%
\special{pa 1230 1650}%
\special{pa 1110 1770}%
\special{fp}%
\special{pa 1250 1690}%
\special{pa 1170 1770}%
\special{fp}%
\special{pa 1280 1720}%
\special{pa 1230 1770}%
\special{fp}%
\special{pa 1320 1740}%
\special{pa 1290 1770}%
\special{fp}%
\special{pa 1380 1080}%
\special{pa 1320 1140}%
\special{fp}%
\special{pa 1380 1140}%
\special{pa 1170 1350}%
\special{fp}%
\special{pa 1380 1200}%
\special{pa 1160 1420}%
\special{fp}%
\special{pa 1430 1210}%
\special{pa 1160 1480}%
\special{fp}%
\special{pa 1490 1210}%
\special{pa 1170 1530}%
\special{fp}%
\special{pa 1550 1210}%
\special{pa 1190 1570}%
\special{fp}%
\special{pa 1610 1210}%
\special{pa 1210 1610}%
\special{fp}%
\special{pa 2050 830}%
\special{pa 1230 1650}%
\special{fp}%
\special{pa 1530 1410}%
\special{pa 1260 1680}%
\special{fp}%
\special{pa 1480 1520}%
\special{pa 1290 1710}%
\special{fp}%
\special{pa 1450 1610}%
\special{pa 1320 1740}%
\special{fp}%
\special{pa 1440 1680}%
\special{pa 1360 1760}%
\special{fp}%
\special{pa 2080 860}%
\special{pa 1740 1200}%
\special{fp}%
\special{pa 2080 920}%
\special{pa 1860 1140}%
\special{fp}%
\special{pa 2080 980}%
\special{pa 1940 1120}%
\special{fp}%
\special{pa 2080 1040}%
\special{pa 2010 1110}%
\special{fp}%
\special{pa 1990 830}%
\special{pa 1620 1200}%
\special{fp}%
\special{pa 1930 830}%
\special{pa 1620 1140}%
\special{fp}%
\special{pa 1870 830}%
\special{pa 1620 1080}%
\special{fp}%
\special{pa 1810 830}%
\special{pa 1620 1020}%
\special{fp}%
\special{pa 1750 830}%
\special{pa 1590 990}%
\special{fp}%
\special{pa 1690 830}%
\special{pa 1530 990}%
\special{fp}%
\special{pa 1630 830}%
\special{pa 1470 990}%
\special{fp}%
\special{pa 1570 830}%
\special{pa 1410 990}%
\special{fp}%
\special{pa 1390 830}%
\special{pa 1100 1120}%
\special{fp}%
\special{pa 1330 830}%
\special{pa 1100 1060}%
\special{fp}%
\special{pa 1270 830}%
\special{pa 1100 1000}%
\special{fp}%
\special{pa 1210 830}%
\special{pa 1100 940}%
\special{fp}%
\special{pa 1150 830}%
\special{pa 1100 880}%
\special{fp}%
\put(14.9000,-17.2000){\makebox(0,0)[lb]{$\varphi^{-1}(0)$}}%
\end{picture}}%
\end{center}
Then the $G$-normal bundle $N$ to $M$ 
naturally inherits a $\spinc$ $G$-structure,
whose restriction to $\partial M$ is the one
induced by $\varphi$.
It implies that $\varphi_V^*\tauu(V_1 \oplus V)=j^*\tauu(N)$,
where $j:S(V_0 \oplus V) \rightarrow V_0 \oplus V$
is the inclusion
and $\varphi_V: S(V_0 \oplus V) \rightarrow V_1 \oplus V$
is the join of $\varphi$ and $\id_V$.
We thus get an element
\begin{align}\label{class}
\gamma \in 
K_{G}(E(V_0 \oplus V) \cup_{\varphi_V}(V_1 \oplus V),
E(0 \oplus V)\cup_{\id_{S(0 \oplus V)}} E(0 \oplus V)).
\end{align}
However, $\varphi_V$ is obviously $G$-homotopic 
to the map 
\begin{align*}
0_V : S(V_0 \oplus V) & \rightarrow D(V_1 \oplus V),\\
(u,v) & \mapsto (0,v).
\end{align*}
So we may replace
$\varphi_V$ in 
\eqref{class}
by  $0_V$.
The resulting space $E(V_0 \oplus V) \cup_{0_V}(V_1 \oplus V)$
is $G$-homeomorphic
to
$(V_0 \oplus V)  \cup_{\id_{D(0 \oplus V)}} (V_1 \oplus V)$,
since one can 
shrink the sphere $S(V_0)\times v$
into the point $(0,v)$ for each $v \in V$.
More explicitly
it is a $G$-map
from $E(V_0 \oplus V)$ to $V_0 \oplus V$
which takes the form of $(u,v) \mapsto (r(u,v)u,v)$, where
$$
r(u,v)
=\begin{cases}
\dfrac{||u||-\sqrt{1-||v||^2}}{||u||}, & ||v|| \le 1, \\
1, & ||v|| \ge 1.
\end{cases}
$$
So we may also suppose the class $\gamma$
is in
$$K_G((V_0 \oplus V)  \cup_{\id_{D(0 \oplus V)}} (V_1 \oplus V),
E(0 \oplus V)\cup_{\id_{S(0 \oplus V)}} E(0 \oplus V)).$$
Under the natural identification
$K_{G}(V_1 \oplus V, E(0 \oplus V))\cong K_{G}(V_1 \oplus V)$,
the restriction of $\gamma$ to $K_{G}(V_1 \oplus V)$
is $\beta(V_1 \oplus V)$,
while
its restriction to 
$K_{G}(V_0 \oplus V, E(0 \oplus V))\cong K_{G}(V_0 \oplus V)$
is $\alpha \beta(V_0 \oplus V)$
for some $\alpha \in K(\pt)$.
In $K_G(D(0 \oplus V), S(0 \oplus V)) \cong K_G(V)$
we have 
$ \alpha e(V_0)\tauu(V)= e(V_1)\tauu(V)$,
since the both sides are the same restriction of $\gamma$.



\subsection{Spin case}

The setting is similar to the previous one.
We denote by $KO_G(B)$ the equivariant $KO$-group,
that is, the Grothendieck group
of isomorphism classes of real $G$-vector bundles over $B$.
By putting $KO^{-m}_G(B)=KO_G(B \times \R^m)$
for the trivial $G$-module $\R^m$,
we have cohomology theory $KO^*_G(B)$.

Let $V$ be a real spin $G$-module of dimension $n$.
It means that 
the action on $V$ factors through
a given homomorphism
$G \to \Spin(n)$.
When $\dim V \equiv 0 \pmod 8$, 
$KO_G(V)$ is freely generated by the Bott class $\beta(V)$.
So we may write $KO^{n}_G(B)=KO_G(B \times \R^m)$,
if $n + m \equiv  0 \pmod 8$.
 
In \cite{FK2}
we extend Bott periodicity as follows:
Put $\tauu(V) =\tauu(V \oplus \R^m)\in KO_G^{n}(V)$
for $n + m \equiv  0 \pmod 8$.
Bott periodicity theorem indicates
that the total cohomology ring $KO^*(V)$
is freely generated by $\beta(V)$
as a $KO_G^*(\pt)$-module.
The Euler class $e(V)\in KO_G^{n}(\pt)$ 
is defined to be its restriction to $\{ 0 \} \oplus \R^m$.

For two real $\spin$ $G$-modules $V_0$, $V_1$
satisfying the condition (i), (iii) in the above,
we have a cobordism class
$\diff(V_0,V_1)$ 
in the cobordism group 
$\Omega_{G, \rm free}^{\spin}$
of closed $\spin$ free $G$-manifolds.
Our proof of the following theorem
is nothing but reputation.

\begin{theorem}\label{spin-th}
Let $V_0, V_1$ be real $\spin$ $G$-modules 
satisfying the condition (i), (iii) in the above.
If the cobordism class $\diff(V_0,V_1)
\in \Omega_{G, \rm free}^{\spin}$ is zero,
there exists an element $\alpha$ in
$KO^{d}_G(\pt)$ ($d=\dim V_1 -\dim V_0$)
such that $$e(V_1) = \alpha e(V_0).$$
\end{theorem}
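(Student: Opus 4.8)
The plan is to run the proof of Theorem~\ref{spinc-th} essentially verbatim, replacing complex $K$-theory by $KO$-theory, $\spinc$ structures by $\spin$ structures, and $\Omega_{G, \rm free}^{\spinc}$ by $\Omega_{G, \rm free}^{\spin}$; the one genuinely new feature is the accounting of $KO$-degrees, which is what produces the shift $d=\dim V_1-\dim V_0$. First I would pick a $G$-map $\varphi\colon S(V_0)\to V_1$ transversal to $0$ with $[\varphi^{-1}(0)]=\diff(V_0,V_1)$; since this class vanishes, there is a compact $\spin$ free $G$-manifold $M$ with $\partial M=\varphi^{-1}(0)$ as $\spin$ $G$-manifolds. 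As $\dim M=\dim V_0-\dim V_1<\dim V_0$ by $(i)$ and $\dim V_1>0$, and $G$ acts freely on $M$, Lemma~\ref{G-ex} gives a real $G$-module $V$ and a $G$-embedding $i'\colon M\to E(V_0\oplus V)\setminus E(0\oplus V)$ restricting to the inclusion on $\partial M$. Enlarging $V$, say replacing it by $V_{\C}\oplus(\det_{\C}V_{\C})^{-1}$ (equivariant $c_1$ zero, hence $\spin$ as a real $G$-module), I may assume $V$, and so $V_0\oplus V$ and $V_1\oplus V$, is a $\spin$ $G$-module.

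Next, the $G$-normal bundle $N$ of $i'(M)$ in $V_0\oplus V$, which satisfies $TM\oplus N\cong\underline{V_0\oplus V}$, inherits a $\spin$ $G$-structure restricting over $\partial M$ to the one determined by $\varphi$. Exactly as in Theorem~\ref{spinc-th} this gives $\varphi_V^{*}\tauu(V_1\oplus V)=j^{*}\tauu(N)$, where $\tauu$ is now the $KO$-theoretic Bott class in the extended sense of~\cite{FK2}, and hence a relative class
\[
\gamma\in KO^{*}_{G}\big(E(V_0\oplus V)\cup_{\varphi_V}(V_1\oplus V),\ E(0\oplus V)\cup_{\id}E(0\oplus V)\big)
\]
in the $KO$-degree forced by $\dim(V_0\oplus V)$ and $\dim(V_1\oplus V)$.

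The remaining steps are purely topological and carry over unchanged: $\varphi_V$ is $G$-homotopic to $0_V\colon(u,v)\mapsto(0,v)$; the glued space $E(V_0\oplus V)\cup_{0_V}(V_1\oplus V)$ is $G$-homeomorphic to $(V_0\oplus V)\cup_{\id_{D(0\oplus V)}}(V_1\oplus V)$ through the explicit radial map with factor $r(u,v)$; so $\gamma$ lives in $KO^{*}_{G}\big((V_0\oplus V)\cup_{\id}(V_1\oplus V),\ E(0\oplus V)\cup_{\id}E(0\oplus V)\big)$. Its restriction to the end $V_1\oplus V$ is $\tauu(V_1\oplus V)$ and to the end $V_0\oplus V$ is $\alpha\,\tauu(V_0\oplus V)$ for a unique $\alpha$, which for degree reasons must lie in $KO^{d}_{G}(\pt)$ with $d=\dim V_1-\dim V_0$. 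Comparing the two restrictions inside $KO^{*}_{G}(D(0\oplus V),S(0\oplus V))\cong KO^{*}_{G}(V)$ yields $e(V_1)\tauu(V)=\alpha\,e(V_0)\tauu(V)$, and since $\tauu(V)$ freely generates $KO^{*}_{G}(V)$ over $KO^{*}_{G}(\pt)$ this gives $e(V_1)=\alpha\,e(V_0)$ in $KO^{\dim V_1}_{G}(\pt)$.

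The step I expect to be the main obstacle is exactly this $KO$-degree bookkeeping: in the complex case every dimension is even and $2$-periodicity hides any shift, whereas here one must verify that the extended Bott classes $\tauu(V_i\oplus V)$, the gluing identity for the $\spin$ bundle $N$, and the relative class $\gamma$ all sit in mutually compatible $KO^{*}_{G}$-groups; it is the incompatibility of $\dim(V_0\oplus V)$ and $\dim(V_1\oplus V)$ modulo $8$ that both forces one to use the mod-$8$ Bott machinery of~\cite{FK2} and confines $\alpha$ to $KO^{d}_{G}(\pt)$ rather than $KO^{0}_{G}(\pt)$. To keep this transparent one may cross the whole construction with a trivial module $\R^{m}$ with $m\equiv-\dim(V_0\oplus V)\pmod 8$, making $\tauu(V_0\oplus V)$ an honest Bott class; the shift then reappears, unavoidably, in $\tauu(V_1\oplus V)$ and in $\alpha$.
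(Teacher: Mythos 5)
Your proposal is correct and follows essentially the same route as the paper: the paper's own proof of Theorem~\ref{spin-th} is explicitly declared to be a repetition of the argument for Theorem~\ref{spinc-th} with $K_G$ replaced by $KO_G$ and the extended Bott classes of \cite{FK2}, which is precisely what you carry out, including the embedding via Lemma~\ref{G-ex}, the enlargement of $V$ to a spin $G$-module, and the two restrictions of $\gamma$ compared in $KO^*_G(V)$. Your degree bookkeeping placing $\alpha$ in $KO^{d}_G(\pt)$, $d=\dim V_1-\dim V_0$, matches the paper's convention $\tauu(V)=\tauu(V\oplus\R^m)\in KO^n_G(V)$ for $n+m\equiv 0\pmod 8$.
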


Note that the condition (ii) is not necessary
in our extension of Euler classes.


\medskip

\subsection{Bundle cases}
We extend Theorem \ref{spin-th} to pairs of spin $G$-vector bundles.
Let $B$ be a compact Hausdorff $G$-space.
By a spin $G$-vector bundle $V$ over $B$,
we mean a spin structure on $V$,
together with a lift
of the $G$-action on $V$ to it.
When $\rank V \equiv 0 \pmod 8$,
$KO_G(V)$ is freely generated by the Bott class
$\beta(V)$ as a $KO_G(B)$-module.
For arbitrary $n=\rank V$,
put $\beta(V)=\beta(V \oplus \underline \R^m)
\in KO^n_G(V)$ for $n +m \equiv 0 \pmod 8$.
The Euler class $e(V) \in KO^n_G(B)$ is the restriction 
of $\beta(V)$
to $B \times \R^m$.
Here we identify the zero section with the base space $B$.


From now we assume that $B$ is a spin $G$-manifold.
Let $V_0$, $V_1$ be spin $G$-bundles over $\Deltaa$.
We suppose that the $G$ action is free on
$V_0 \setminus B$,
and so we can take a fiber-preserving $G$-map $\varphi: S(V_0) \to V_1$
which is transversal to the zero section $B$.
Then $\varphi^{-1}(B)$ 
is a closed free $G$-submanifold in $S(V_0)$.

Let $\pi: V_0 \to B$ be the projection.
We put a spin $G$-structure on $TV_0$
from an isomorphism $\pi^*(V_0 \oplus TB) \cong TV_0$
and on $\varphi^{-1}(0)$
from an isomorphism
\begin{align*}
T\varphi^{-1}(0) \oplus \underline \R 
\oplus  \pi^* V_1|\varphi^{-1}(0) \cong TV_0 |\varphi^{-1}(0).
\end{align*}
Let $\Omega_{G, \rm free}^{\spin}(B)$
be the cobordism group
of $G$-maps from a closed spin free $G$-manifold to $B$.
We may write the cobordism class of $\pi|\varphi^{-1}(0)$ 
as $\diff(V_0,V_1)\in \Omega_{G, \rm free}^{\spin}(B)$,
since it does not depend on the choice of $\varphi$.

\begin{theorem}\label{spin2}
Let $\Deltaa$ be a closed spin $G$-manifold.
Let $V_0, V_1$ be spin $G$-bundles over $B$.
satisfying the condition (i), (iii) 
on each fiber.
If $\diff(V_0,V_1)$ is zero in $\Omega_{G, \rm free}^{\spin}(\Deltaa)$,
there exists an element $\alpha \in KO^{d}_{G}(\Deltaa)$ 
($d=\rank V_1 -\rank V_0$)
such that $$e(V_1) = \alpha e(V_0).$$
\end{theorem}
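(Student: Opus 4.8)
The plan is to repeat the proof of Theorem~\ref{spinc-th} (equivalently Theorem~\ref{spin-th}) fiberwise over the base $B$, with the ambient $G$-module of Lemma~\ref{G-ex} replaced by a spin $G$-bundle over $B$ and the origin replaced by the zero section. As in the spin case, everything except the embedding step is pure repetition, so I would concentrate on a fibered version of Lemma~\ref{G-ex}. First I would choose a fiber-preserving $G$-map $\varphi\colon S(V_0)\to V_1$ transversal to the zero section, so that $\pi|\varphi^{-1}(0)$ represents $\diff(V_0,V_1)\in\Omega_{G,\rm free}^{\spin}(B)$; since this class vanishes, there are a compact spin free $G$-manifold $M$ and a $G$-map $f\colon M\to B$ with $\partial M=\varphi^{-1}(0)$ and $f|\partial M=\pi|\varphi^{-1}(0)$, compatibly with the spin $G$-structures fixed above.

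Next I would establish the fibered embedding: there are a spin $G$-bundle $V$ over $B$ and a $G$-embedding $i'\colon M\to E(V_0\oplus V)\setminus E(0\oplus V)$ lying over $f$ whose restriction to $\partial M$ is the given inclusion $\partial M\hookrightarrow S(V_0)$. To build it I would first embed $M$ over $B$ into $V_0\oplus V$ for a suitable spin $G$-bundle $V$ (using an equivariant embedding of $M$ into a $G$-module as in~\cite[Chapter VI]{Bredon}, together with $f$), then splice this with the collar embedding of $\partial M\times[0,1]$ coming from $\varphi|\partial M$ by means of cut-off functions exactly as in the proof of Lemma~\ref{G-ex}, and finally make $i'(M)$ disjoint from the subbundle $0\oplus V$ and rescale so that $i'(M)\subset E(V_0\oplus V)$. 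The $G$-normal bundle $N$ of $i'(M)$ in $V_0\oplus V$ satisfies $N\oplus TM\cong f^*(V_0\oplus V\oplus TB)$, hence carries a spin $G$-structure built from those on $B$, $M$, $V_0$ and $V$, restricting on $\partial M$ to the normal data prescribed by $\varphi$; this gives $\varphi_V^*\beta(V_1\oplus V)=j^*\beta(N)$, where $\varphi_V\colon S(V_0\oplus V)\to V_1\oplus V$ is the fiberwise join of $\varphi$ and $\id_V$ and $j\colon S(V_0\oplus V)\hookrightarrow V_0\oplus V$, and gluing $\beta(N)$ to $\beta(V_1\oplus V)$ produces an element
$$\gamma\in KO_G\bigl(E(V_0\oplus V)\cup_{\varphi_V}(V_1\oplus V),\;E(0\oplus V)\cup_{\id_{S(0\oplus V)}}E(0\oplus V)\bigr)$$
exactly as in~\eqref{class}.

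The rest is repetition of the spin${}^c$ argument. The map $\varphi_V$ is $G$-homotopic to $0_V\colon(u,v)\mapsto(0,v)$, and after this replacement the space becomes $G$-homeomorphic to $(V_0\oplus V)\cup_{\id_{D(0\oplus V)}}(V_1\oplus V)$ by collapsing each sphere $S(V_0)\times\{v\}$ to the point $(0,v)$. Restricting $\gamma$ to the $V_1\oplus V$ end gives $\beta(V_1\oplus V)$, and restricting it to the $V_0\oplus V$ end gives $\alpha\,\beta(V_0\oplus V)$ for a unique $\alpha\in KO^d_G(B)$ with $d=\rank V_1-\rank V_0$, using the freeness of $KO_G(V_0\oplus V)$ and $KO_G(V_1\oplus V)$ over $KO_G(B)$ furnished by the extension of Bott periodicity in~\cite{FK2}. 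Comparing the two restrictions in $KO_G(D(0\oplus V),S(0\oplus V))\cong KO_G(V)$ gives $\alpha\,e(V_0)\beta(V)=e(V_1)\beta(V)$, and cancelling the free generator $\beta(V)$ yields $e(V_1)=\alpha\,e(V_0)$.

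The main obstacle is the fibered embedding in the second step: one must produce $i'$ equivariantly \emph{over} $f$, landing in $E(V_0\oplus V)$ and \emph{disjoint from the subbundle $0\oplus V$} — this disjointness is exactly what places $\gamma$ in the relative group — while keeping the spin $G$-structures on $TB$, $TM$, $V_0$, $V_1$, $V$ and the induced normal bundle $N$ mutually coherent, so that the Thom-class identity is a genuine equality in $KO_G$ rather than one valid only up to the ambiguity of a spin structure. A transversality count shows the disjointness can be achieved once the fibers of $V_1$ have rank greater than $\dim B$; in general one should first replace $(V_0,V_1)$ by $(V_0\oplus W,V_1\oplus W)$ for a large free spin $G$-module $W$ — which leaves $\diff(V_0,V_1)$ unchanged, since the fiberwise join $\varphi_W$ satisfies $\varphi_W^{-1}(0)\cong\varphi^{-1}(0)$ with the same spin structure — and then arrange to cancel the Euler class of $W$. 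Everything else reduces to the spin${}^c$ and spin cases already in hand.
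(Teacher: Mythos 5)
Your overall construction runs parallel to the paper's: the paper does not prove a fibered embedding lemma, but instead stably trivializes $V_0$ by a complementary $G$-bundle $V$ with $V_0\oplus V\cong\underline{V'}$ \cite{Segal}, applies the non-fibered Lemma~\ref{G-ex} to get $i'\colon M\to V'\oplus V''$ extending the boundary data, and then uses the product map $\pi_M\times i'$ to obtain an embedding over $B$; after that it declares the rest ``very similar to Theorem~\ref{spin-th}''. Your fibered version of Lemma~\ref{G-ex} achieves the same thing (and in fact does not need \cite{Segal}, since your ambient bundle can be taken of the form $V_0\oplus\underline{V''}\oplus\underline{\R}^2\oplus\cdots$), so up to and including the construction of $\gamma$ and the cancellation of the free generator $\beta(V)$ your plan is sound and essentially the paper's.

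The genuine gap is in your last paragraph. You correctly identify that the disjointness $i'(M)\cap E(0\oplus V)=\emptyset$ — the fibered analogue of the ``Moreover'' clause of Lemma~\ref{G-ex} — is only generic when $\rank V_1>\dim B$ (the bad locus has codimension $\rank V_0$ while $\dim M=\dim B+\rank V_0-\rank V_1$), but the theorem assumes only (i), (iii) fiberwise, so the case $\rank V_1\le\dim B$ must be covered, and your proposed remedy fails. Replacing $(V_0,V_1)$ by $(V_0\oplus W,V_1\oplus W)$ does leave the cobordism class unchanged, but the argument then produces $\alpha$ with $e(V_1)e(W)=\alpha\,e(V_0)e(W)$ in $KO^{*}_G(B)$, and ``arranging to cancel the Euler class of $W$'' is not available: $e(W)$ is in general a zero divisor rather than a regular element (already for $G=\Z/2$ and $W=\tilde\R^{8}$ one has $e(W)$ annihilated by the image of the regular representation, the $KO$-analogue of $(1-\xi)(1+\xi)=0$ in $R(\Z/2)$). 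Being able to divide by an Euler class is exactly the kind of statement the theorem asserts, so as written this step is circular; you would need either a different way to achieve the disjointness for small $\rank V_1$ (note the paper's own write-up is silent on this point too), or a $W$ for which regularity of $e(W)$ in $KO^{*}_G(B)$ is actually proved, before the proposal can be accepted as a proof of Theorem~\ref{spin2} in the stated generality.
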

\proof
We take a fiber-preserving $G$-map $\varphi: S(V_0) \to V_1$ as above,
so that $[\varphi^{-1}(0)]=\diff(V_0,V_1)$.
We can take a  $G$-bundle $V$ such
that $V_0 \oplus V \cong \underline V'$
for some real $G$-module $V'$ \cite{Segal}. 
Since the $G$-action on $\varphi^{-1}(0)$ is free,
we may use $G$-isotopy for the composite
$i :\varphi^{-1}(0) \to S(V_0) \to V_0 \oplus V  \to V'$
to be a $G$-embedding by adding some $G$-module to $V$.
By assumption we have a compact $\spinc$ free $G$-manifold $M$
and a $G$-map $\pi_M: M \to B$
with $\partial M =\varphi^{-1}(0)$ as $\spinc$ $G$-manifolds
and $\pi_M|\partial M =\pi|\varphi^{-1}(0)$.
It follows from Lemma \ref{G-ex}
that there exists a $G$-module $V''$
and a $G$-embedding  $i:  M \to V' \oplus V''$
such that $i'| \partial M =i$.
Then the product map $\pi_M \times i: M \to \Deltaa \times (V' \oplus V'')$
is a  $G$-embedding,
which commutes with projection to $\Deltaa$.
On the other hand, we may assume that
$V \oplus \underline V''$ is a spin $G$-bundle.
If one continues to take account of projection to $\Deltaa$,
the rest of the proof is very similar to Theorem \ref{spin-th}.
\qed

\section{Applications}
Let $X$ be a connected closed oriented spin $4$-manifold.
Take a Riemannian metric on $X$.
Then the spinor bundle for $X$ has a quaternionic structure.
We apply our result for the moduli space of monopoles,
which is known to be compact.
We denote by $W$ the framed moduli space,
on which $\pin=\langle U(1), j \rangle \subset \HH$
acts as the scalar multiplications on spinors of $X$,
and acts involution on forms of $X$
via the projection $\pin \to \{\pm 1\}$.
We may suppose that $W$ is smooth except the reducibles,
since the $\pin$-action is free  (c.f. \cite{Kr-M}).

If $b_1(X)=0$,
there is only one gauge equivalent classes of reducibles,
which is represented as the pair of 
the trivial connection and the zero section.
The Kuranishi model around it is
a $\pin$-equivariant map
\begin{align}\label{Kura-m}
\Phi: \HH^{k+p} \to \HH^{p} \oplus \tilde \R^{l}
\end{align}
where $k=-\sigma(X)/16$, $l=b^+_2(X)$
and $\tilde \R$ is the non-trivial one dimensional
real irreducible $\{\pm 1\}$-module.
We assume $l>0$.

When $l \equiv 0 \pmod 4$,
we can put a spin $\pin$-structure on $\tilde \R^l$.
Recall that the class $[T(W \setminus [\text{reducible}])]$
in $KO_{\pin}$-theory
is the index bundle of the linearization of the monopole equation,
which extends over any compact set of the ambient space (c.f. \cite{Mor}).
Since the ambient space is $\pin$-contractible 
to the reducible,
at which the index is $[\HH^k]-[\tilde \R^l] \in KO_{\pin}(\pt)$,
there is a $\pin$-isomorphism
$$T(W \setminus [\text{reducible}])
\oplus \underline{\tilde\R}^{l}\oplus \underline V
\cong \underline {\HH}^{k} \oplus  \underline V$$
for some $\pin$ -module $V$.
We may suppose that $V$ is a spin $G$-module.
Then it implies that $W \setminus [\text{reducible}]$
has a spin $\pin$-structure 
whose restriction to a neighborhood of the reducible
is that defined by \eqref{Kura-m}.
From Theorem \ref{spin-th},
we have the following divisibility of Euler classes
\begin{align}\label{div}
e(\HH^p \oplus \tilde \R^l)=\alpha e(\HH^{k+p})
\end{align}
for some $\alpha \in KO^{l-4k}_{\pin}(\pt)$.

When $l \equiv 0 \pmod 2$,
we can put a spin $\Gamma$-structure on $\tilde \R^l$,
and we have a divisibility \eqref{div}
in $KO^*_{\Gamma}(\pt)$,
where
$\dpin$ is the $2$-fold covering of $\pin$ 
obtained from the pull-back diagram:
\begin{equation*}
\begin{CD}
\dpin @>{}>> \pinn \\
   @VV{}V           @VV{}V    \\
\pin  @>{}>>             O(1),  \\
\end{CD}
\end{equation*}
and $\pinn$ is the pinor group $\mathrm{Pin(1)}$ in \cite{Atiyah-B-S}.
\begin{remark}
What we obtained here is a cobordism class of 
$G$-manifolds with 
$G$-equivariant stable parallelization,
which is stronger than spin structure.
Any corresponding cohomology theory is available to get divisibility,
if one knows Euler class.
\end{remark}

In general,
the reducibles consist
of pairs of flat $U(1)$-connection on the trivial bundle
and the zero section of the half spinor bundle.
We may identify it with the Jacobian torus
$$J_X=H^1(X; \R)/H^1(X;\Z).$$
We can associate each flat $U(1)$-connection $a$
to the Dirac operator $D_a$ on the spinor bundle.
We regard $J_X$ as a Real space. 
Let $Ksp(J_X)$ be the $Ksp$-group,
that is, the $K$-group of symplectic
bundles, equivalently,
complex vector bundles with
anti-complex linear action $j$ with $j^2=-1$.
Then the index bundle
$\Ind \Bbb D$ over $J_X$
constructed from the family $\Bbb D=\{D_a\}$
is an element in $Ksp(J_X)$,
We may write it as
$\Ind \Bbb D=[\Ker \Bbb D^+]-[\Ker \Bbb D^-]$.
Then the Kuranishi model around $J_X$ is
a fiber preserving $\pin$-equivariant map
$$\Phi: \Ker \Bbb D^+
\to \underline{H^+(X;\R)}\oplus \Coker \Bbb D^-.$$
We showed in \cite{FK2}
that one can put a spin $\pin$-structure 
on symplectic vector bundles over $J_X$.
Since the ambient space is $\pin$-contractible
to $J_X$.
Theorem \ref{spin-th} tells us
the following divisibility of Euler classes
\begin{align}\label{div-J}
e(\Ker \Bbb D^- \oplus \underline{H^+(X;\R)})
=\alpha e(\Ker \Bbb D^+)
\end{align}
for some $\alpha$ 
in $KO^{l-4k}_{\pin}(J_X)$,
or $KO^{l-4k}_{\dpin}(J_X)$.

\begin{remark}
The above divisibility may depend on the representation
of the index bundle 
as $\Ind \Bbb D=[\Ker \Bbb D^+]-[\Ker \Bbb D^-]\in Ksp(J_X)$.
Thus $\alpha$ is determined in the localization
in $KO^{l-4k}_{\pin}(J_X)_{\HH}$,
or $KO^{l-4k}_{\dpin}(J_X)_{\HH}$.
Our calculation in \cite{FK2} shows
that  the equation \eqref{div-J} determines $\alpha$
in the above localization.
\end{remark}

  {\scriptsize Department of Mathematics,
  Keio University, Yokohama 223-8522, Japan
  (kametani@math.keio.ac.jp)}

\end{document}